\documentclass[reqno, 10pt]{amsart}




\usepackage[utf8]{inputenc}
\usepackage[T1]{fontenc}
\usepackage[english]{babel}

\usepackage{amssymb, amsfonts}

\usepackage{stmaryrd}


\usepackage{enumerate}
\usepackage{booktabs}

\usepackage{listings}

\usepackage{url}





\linespread{1.04}


\numberwithin{equation}{section}

\newtheorem{theoremcounter}{theoremcounter}[section]



\newtheorem{algorithm}[theoremcounter]{Algorithm}
\newtheorem{definition}[theoremcounter]{Definition}
\newtheorem{example}[theoremcounter]{Example}
\newtheorem{lemma}[theoremcounter]{Lemma}
\newtheorem{remark}[theoremcounter]{Remark}

\newtheorem{theorem}[theoremcounter]{Theorem}










\newcommand{\tit}{\itshape}



\newcommand{\nbd}{\nobreakdash-\hspace{0pt}}



\renewcommand{\frak}{\ensuremath{\mathfrak}}
\newcommand{\cal}{\ensuremath{\mathcal}}
\newcommand{\bboard}{\ensuremath{\mathbb}}




\newcommand{\frakA}{\ensuremath{\frak{A}}}
\newcommand{\frakB}{\ensuremath{\frak{B}}}
\newcommand{\frakC}{\ensuremath{\frak{C}}}
\newcommand{\frakD}{\ensuremath{\frak{D}}}
\newcommand{\frakE}{\ensuremath{\frak{E}}}


\newcommand{\cD}{\ensuremath{\cal{D}}}
\newcommand{\cE}{\ensuremath{\cal{E}}}
\newcommand{\cF}{\ensuremath{\cal{F}}}

\newcommand{\cO}{\ensuremath{\cal{O}}}

\newcommand{\cQ}{\ensuremath{\cal{Q}}}


\newcommand{\bbP}{\ensuremath{\bboard P}}

\newcommand{\NN}{\ensuremath{\mathbb{N}}}
\newcommand{\ZZ}{\ensuremath{\mathbb{Z}}}
\newcommand{\QQ}{\ensuremath{\mathbb{Q}}}
\newcommand{\RR}{\ensuremath{\mathbb{R}}}
\newcommand{\CC}{\ensuremath{\mathbb{C}}}


\renewcommand{\Im}{\ensuremath{\mathop{\mathfrak{Im}}}}








\newcommand{\Mat}[2]{\ensuremath{\mathrm{M}_{#1}(#2)}}



\newcommand{\GL}[1]{\ensuremath{\mathrm{GL}_{#1}}}
\newcommand{\SL}[1]{\ensuremath{\mathrm{SL}_{#1}}}

\newcommand{\Orth}[2]{\ensuremath{\mathrm{O}_{#1,#2}}}



\newcommand{\tr}{\ensuremath{\mathrm{tr}}}



\newcommand{\slashdiv}{\ensuremath{\mathop{/}}}







































\newcommand{\HS}{\mathbb{H}}





























\newcommand{\td}{\tilde}

\newcommand{\ov}{\overline}


\newcommand{\Mp}[1]{\ensuremath{\mathrm{Mp}_{#1}}}

\newcommand{\disc}{\ensuremath{\mathrm{disc}}}

\newcommand{\OrthL}{\ensuremath{\mathrm{O}(L)}}
\renewcommand{\pmod}[1]{\ensuremath{\;(\mathrm{mod}\, #1)}}


\begin{document}
\title{Computing Borcherds products}

\author{Dominic Gehre}
\address{Lehrstuhl A f\"ur Mathematik\\ RWTH Aachen University\\ Templergraben 55\\\newline D\nbd 52056~Aachen, Germany}
\email{dominic.gehre@matha.rwth-aachen.de}

\author{Judith Kreuzer}
\address{Lehrstuhl A f\"ur Mathematik\\ RWTH Aachen University\\ Templergraben 55\\\newline D-52056~Aachen, Germany}
\email{judith.kreuzer@matha.rwth-aachen.de}

\author{Martin Raum}
\address{ETH Mathematics Dept.\\ R\"amistra\ss e~101\\ CH-8092, Z\"urich, Switzerland}
\email{martin.raum@math.ethz.ch}
\urladdr{http://www.raum-brothers.eu/martin/}

\thanks{The second author holds a scholarship of the Graduate School ``Experimental and Constructive Algebra'' of the RWTH Aachen University.  The third author held a scholarship of the Max Planck Society.  He currently holds an ETH Postdoctoral Fellowship, which is cofunded by Marie Curie actions COFUND}

\date{}
\subjclass[2010]{Primary 11F55, 11F30; Secondary 11Y16, 11F46}
\keywords{Borcherds products, Fourier expansions, explicit computations}

\begin{abstract}
We present an algorithm for computing Borcherds products, which has polynomial runtime.  It deals efficiently with the bounds on Fourier expansion indices originating in Weyl chambers.  Naive multiplication has exponential runtime due to inefficient handling of these bounds.  An implementation of the new algorithm shows that it is also much faster in practice.
\end{abstract}

\maketitle

\section{Introduction}

Product expansions are a useful construction in the world of automorphic forms.  For example, writing $\tau$ and $\tau'$ for two variables in the Poincar\'e upper half plane, we can express the famous $j$-invariant as follows:
\begin{gather*}
  j(\tau) - j(\tau')
=
  \big(e^{-2 \pi i \tau'} - e^{-2 \pi i \tau} \big)
  \prod_{m,n = 1}^\infty
  \big(1 - e^{2 \pi i (m \tau + n \tau')}\big)^{c_{m n}}
\end{gather*}
for certain integral coefficients $c_{m n}$.  This formula holds for sufficiently large imaginary parts of $\tau$ and $\tau'$.  Several facts can be deduced from it.  For instance, one sees immediately that close to infinity, the $j$-function is one-to-one on the fundamental domain.

It turns out that product expansions exist for a much larger class of automorphic forms, namely orthogonal modular forms for $\Orth{2}{n}(\RR)$.  Such product expansions allow for a direct investigation of all zeros of such an automorphic form, regardless of the limited domain of convergence they have.

The convergence of these more general product expansions remained an open problem for a long time.  When Borcherds presented his work on product expansions for orthogonal groups~\cite{Bo98} (see also \cite{Bo95, Ko97, HM98}), he started a revolution.  Product expansions, which at that time played a role in the classical theory of elliptic modular forms and Jacobi forms only, became important to other areas in mathematics.  One branch of research aimed at Kac-Moody algebras which could be constructed by means of lattice theory~\cite{Gr95, GN98, Sch04, Sch08}.  As a natural generalization of finite dimensional Lie algebras, they attracted the physicists' attention.  On the automorphic forms side, research in this area focuses on orthogonal groups for rather big lattices.  A second, equally popular branch of research originating in Borcherds' work is concerned with the structure of divisor class groups and Chern classes~\cite{Bo99, Bo00, Br02, CG08, BO10}.  It is founded on the fact that one can easily read off the divisor of an automorphic form given by a product expansion.  Lattices with particular nice structure, which are typically of moderate size, are in the focus of mathematicians working in this direction.  The Fourier expansion of Borcherds products is a particular aspect that is interesting in this context.  Among other things, it can be used to understand the structure of graded rings of modular forms~\cite{DK04, Kr05, Ma10}, or to check for relations like the ones coming from Saito\nbd Kurokawa-Maa\ss-Gritsenko lifts~\cite{HM10}.  Most importantly, the multiplicative version of the  degeneracy of BPS dyons, occurring in string theory, are Borcherds product of this type (see, for example,~\cite{HM98, JS05, DMZ11}).  The last named author will return to this subject in a sequel to this paper.

\vspace{1ex}

The purpose of this work is to provide an algorithm for explicit computations with Borcherds products that runs in polynomial time.  In particular, such an algorithm needs to build up on efficient bounds on the Fourier expansion indices that arise from positivity conditions related to Weyl chambers.  More precisely, the (infinite) products that we have to deal with essentially have the form (see~(\ref{eq:borcherds_psif}) for a precise formula) 
\begin{multline*}
  \prod_{[a,b,c] \succ 0} \big(\, 1 - e\big(a \tau + c \tau' - (b, z)_{L_0}\big) \, \big)^{f([a,b,c])}
\\[4pt]
=
  \sum_{[a,b,c]} g([a,b,c]) e\big(a \tau + c \tau' - (b, z)_{L_0}\big)
\text{,}
\end{multline*}
where $e(x) = \exp(2 \pi i \, x)$,  $[a,b,c]$ are vectors in a lattice of signature~$(1, n - 1)$, and $f([a,b,c])$ are certain coefficients of an elliptic modular form.  By definition, we have $[a, b, c] \succ 0$ if $[a,b,c]$ has positive scalar product with all elements in a certain subset, called a Weyl chamber, of that lattice.  One typically wants to evaluate the above product up to a certain bound.  That is, for given $B > 0$, one aims at computing all $g([a,b,c])$ with $0 \le a, c < B$.    Although the positivity condition $[a, b, c] \succ 0$ is natural and can be described in simple terms, its influence on the product as a whole is involved.  For example, a priori there are infinitely many $f([a,b,c])$ that contribute to $g([1, 0, 1])$.  Further considerations reveal that only finitely many contribute, and it is an important matter to provide good corresponding bounds.

When it comes to explicitly evaluating Borcherds products, the state of the art is~\cite{Ma10}, dealing with Hilbert modular forms.  Remark~5.1 and Lemma~5.9~(5) of~\cite{Ma10} contain the only considerations concerning the resulting bounds on those $[a,b,c]$ for which the powers of $( 1 - \exp (a \tau + c \tau' - (b, z)_{L_0}) )$ need to be computed.  Moreover, in the case of Hilbert modular forms the component $b$ does not appear, something that simplifies the corresponding estimates.  In the case of rank $2$ Lie groups $\Orth{2}{n}(\RR)$, $n \ge 3$ no similar estimate can be found in the literature.  Some computations of Borcherds products in this setting can be found in the physics literature, see, e.g.,~\cite{Kr10}.  Without bounds for the contributing indices, however, a rigorous evaluation of Borcherds products is not possible;  and with bad bounds it is inefficient.  I.e., a naive evaluation of Borcherds products in a strict sense is not even possible.

To provide efficient bounds for all orthogonal modular forms, we employ two ideas:  to make the problem as linear as possible and to decompose the resulting expression into parts that behave differently.  In order to achieve the first objective, we first apply the logarithm and then the exponential to Borcherds' original formula.  We then evaluate the logarithmic term first.  This avoids the repeated multiplication of polynomials, and it allows for a more efficient estimate of the indices of the non-vanishing terms in the Fourier expansion.  To achieve the second objective, we analyze the structure of Fourier indices that are positive with respect to a fixed Weyl chamber (see \eqref{eq:frakA_part} to \eqref{eq:frakE_part}).  This allows us to significantly reduce the number of terms taken into consideration.

In order to illustrate our algorithm, we provide an implementation in Sage~\cite{sage, singular} of the case of Hermitian modular forms over $\QQ \big(\sqrt{-3}\big)$ .  We compare it to an implementation based on naive multiplication (mainly using FLINT~\cite{flint}, written in C).  Even though our implementation is not written in a compiled language, it is faster by several orders of magnitude.  Finally, we also discuss how to provide input data for our implementation.

\vspace{1ex}

In Section~\ref{sec:preliminaries}, we review the basic theory of orthogonal modular forms and Borcherds products.  Section~\ref{sec:algorithm} deals with the algorithm to compute Fourier expansions of Borcherds products.  The implementation for Hermitian modular forms is discussed in Section~\ref{sec:hermitianborcherdsproducts}.  In particular, this section contains a comparison of runtimes.


{\tit Acknowledgment: The authors would like to thank the referee for an extraordinarily thorough and constructive critique of the original manuscript.}
\section{Preliminaries}
\label{sec:preliminaries}

\subsection{Orthogonal modular forms}
\label{ssec:orthogonalmodularforms}

Throughout the paper, we fix an even lattice $L$ of signature $(2, n)$ with $n \ge 3$.  The quadratic form attached to $L$ is denoted by $q_L$, while we write $(\mu,\lambda)_L$ for the bilinear form $q_L(\mu + \lambda) - q_L(\mu) - q_L(\lambda)$.  There is a natural extension of $q_L$ to $L \otimes \QQ$ that we will denote by the same letter.  The dual lattice of $L$ is
\begin{align*}
  L^\#
&=
  \big\{\lambda \in L \otimes \QQ : (\mu, \lambda)_L \in \ZZ \text{ for all } \mu \in L\big\}
\text{.}
\end{align*}
Clearly, $L \subseteq L^\#$.  Thus we can construct the $\ZZ$-module $L^\# / L$, that is equipped with the quadratic form $\ov{q}_L \equiv q_L \pmod{1}$ taking values in $\QQ / \ZZ$.  The pair $(L^\# / L,\,\ov{q}_L)$ is a finite quadratic module of order $|L|$.  Given a basis for $L$, we find that $|L|$ equals the determinant of the associated Gram matrix.  We call $(L^\# / L,\,\ov{q}_L)$ the discriminant module of $L$, which we denote by $\disc\, L$.

We write $\OrthL$ for the orthogonal group associated to $L$.  It is the group of all $\ZZ$-linear bijections of $L$ that leave the quadratic form $q_L$ invariant.  Any such transformation acts also on $L^\#$ and $\disc\, L$.  The discriminant kernel $\OrthL[\disc\, L]$ consists of all transformations in~$\OrthL$ fixing $\disc\, L$ elementwise.

We consider a fixed connected component $\cD_e^+$ of
\begin{align*}
  \cD_e
:=
  \big\{\lambda \in L \otimes \CC \,:\, q_L(\lambda) = 0 \wedge (\lambda,\overline \lambda)_L > 0\big\}
\text{,}
\end{align*}
where $\ov{\lambda}$ is the complex conjugate in the second tensor component of $\lambda$.  The projectivisation $\cD^+ = \bbP(\cD_e^+) = \big( \cD_e^+ \setminus \{ 0 \} \big) \slashdiv \CC$, which is a Hermitian locally symmetric domain of type IV~\cite{Hel78}, is the natural domain of definition for orthogonal modular forms.  We define these using the $-k$-homogeneous pullback to $\cD_e \setminus \{ 0 \}$.  A function $f \,:\, \cD_e^+ \setminus \{ 0 \} \rightarrow \CC$ is called $-k$-homogeneous if $f(s \lambda) = s^{-k} f(\lambda)$ for all $s \in \CC \setminus \{ 0 \}$ and all $\lambda \in \cD_e^+ \setminus \{ 0 \}$.
\begin{definition}
An orthogonal modular form of weight $k$ for a finite index subgroup $\Gamma \subset O(L)$ is a holomorphic, $-k$-homogeneous function $f \,:\, \cD_e \rightarrow \CC$ that is invariant under~$\Gamma$: $f \circ \gamma = f$ for all $\gamma \in \Gamma$.
\end{definition}
\noindent After fixing a section $\cD^+ \hookrightarrow \cD_e$, we identify any orthogonal modular form with its restriction to $\cD^+$.

We are interested in Borcherds products that are associated to lattices that contain two hyperbolic planes.  Hence, throughout the paper, we assume that $L = U \oplus U \oplus (-1)L_0$, where $U$ is the unimodular hyperbolic lattice of rank $2$ and $L_0$ is a fixed even positive definite lattice.  We write $q_{L_0}$ and $(\cdot\,,\,\cdot)_{L_0}$ for the quadratic and bilinear form associated to $L_0$.  Typically, $L_0$ is some order in a composition algebra~$A$ together with the quadratic form attached to $A$ restricted to this order.  Note that $\disc\, L = \disc\, (-1)L_0$.  There is a canonical section $\cD^+ \hookrightarrow \cD_e$ whose image has elements $\big( 1, q_{L_0}(z) - \tau \tau', \tau, \tau', z \big)$, $\tau, \tau' \in \CC$, $z \in L_0 \otimes \CC$.  Here, the first and the second entry, and the third and fourth entry are coordinates with respect to a standard basis of the first and second copy of $U$, respectively.  This element is always isotropic, since
\begin{gather*}
  1 \cdot \big( q_{L_0}(z) - \tau \tau'\big)
  + \tau \cdot \tau'
  + (-1) q_{L_0} (z)
=
  0
\text{.}
\end{gather*}
It lies in $\cD_e$ if
\begin{gather*}
  q_L\big( \Im(1, q_{L_0}(z) - \tau \tau', \tau, \tau', z) \big)
>
  0
\text{,}
\end{gather*}
where $\Im$ is applied to the second tensor component.

We shall show in a moment that in this setting, any orthogonal modular form has a Fourier expansion
\begin{gather}
\label{eq:orthogonal_fourier_expansion}
  \sum \alpha([a,b,c]) \, e\big( a \tau + c \tau' - (b, z)_{L_0} \big)
\text{,}
\end{gather}
where $e(x) := \exp(2 \pi i \, x)$.  The sum ranges over triples $[a,b,c]$ with $a, c \in \QQ$ and $b \in L_0 \otimes \QQ$, which correspond to vectors $(0, 0, a, c, b) \in L \otimes \QQ$.  We write $\cQ_\QQ$ for the additive group of such triples.  The submonoid of integral indices $[a,b,c] \in \cQ_{\QQ}$ that satisfy $a,c \in \ZZ$ and $b \in L_0^\#$, is denoted by $\cQ$.  All Fourier expansions that we will deal with satisfy $\alpha([a,b,c]) = 0$, if $[a,b,c] \not \in \cQ$.

We call an index positive definite or semi-definite if
\begin{gather*}
  \disc([a,b,c])
:=
  q_l\big((0, 0, a, c, b)\big)
=
  a c - q_{L_0}(b) > 0
\quad\text{and}\quad
  a > 0
\text{,}
\end{gather*}
or $\disc([a,b,c]) \ge 0$ and $a, c \ge 0$.  We write $[a,b,c] > 0$ if $[a,b,c]$ is positive definite, and $[a,b,c] \ge 0$ if it is positive semi-definite.  The monoid of integral positive semi-definite indices is denoted by $\cQ^+$.  

Since the authors are not aware of any publicly available, explicit discussion of the above Fourier expansion~(\ref{eq:orthogonal_fourier_expansion}), we briefly prove it.  Fix an orthogonal modular form~$\Phi$ for a finite index subgroup $\Gamma \subset \OrthL$, and view it as a function of $\tau$, $\tau'$, and $z$.  Set $u = (0, 1, 0, 0, 0) \in L \otimes \QQ$.  Given an element of $[a', b', c'] = (0, 0, a', c', b')$ of $U \oplus (-1) L_0 \subset L$, we can define Eichler transformations~$E\big( u, [a', b', c'] \big) \in \OrthL$~(see~\cite{FH00}, Rem.~4.2).  By definition, we have
\begin{align*}
& {}
  E\big( u, [a', b', c'] \big) \, \big( 1, q_{L_0}(z) - \tau \tau', \tau, \tau', z \big)
\\
= & {}
  \big(1, q_{L_0}(z) - \tau \tau', \tau, \tau', z \big)
  - [a', b', c]
  + \big( \big( c' \tau + a' \tau' - (b', z)_{L_0} \big) - \disc([a', b', c']) \big) u
\\
= & {}
  \big( 1, q_{L_0}(z) - \tau \tau' + c' \tau + a' \tau' - (b', z)_{L_0} - \disc([a', b', c']) , \tau - a', \tau' - c', z - b' \big)
\text{.}
\end{align*}
Consequently, invariance under $E\big( u, [a', b', c'] \big)$ implies that $\Phi(\tau, \tau', z) = \Phi(\tau + a', \tau' + c', z + b')$.  If $\Gamma = \OrthL$, then $\Phi$ is invariant under all $E\big( u, [a', b', c'] \big)$, $[a', b', c'] \in U \oplus (-1) L_0$.  It hence has a Fourier expansion~(\ref{eq:orthogonal_fourier_expansion}) with $[a, b, c]$ running through~$\cQ$.  In general, since $\Gamma$ has finite index in $\OrthL$, $\Phi$ is invariant under $E\big( m\cdot u, [m a', m b', m c'] \big)$ for some positive integers~$m$ and all $[a', b', c'] \in U \oplus (-1) L_0$.  In this case, the Fourier expansion of $\Phi$ is indexed by elements of $\frac{1}{m} \cQ \subset \cQ_\QQ$.

The discriminant kernel of the second and third component of $L$,
\begin{gather*}
  M
:=
  O(U \oplus (-1) L_0)[\disc\, (-1)L_0] \,\cap\, \Gamma \subset O(L)
\text{,}
\end{gather*}
gives rise to symmetries of the Fourier expansion. More precisely, we have
\begin{gather*}
  \alpha([a,b,c])
=
  \chi_k(m)\, \alpha(m [a,b,c])
\end{gather*}
for all $m \in M$ and a character $\chi_k$ of $M$ that depends only on the weight of an orthogonal modular form.

\begin{remark}
For an arbitrary lattice $\widetilde{L}$ of signature~$(2, n)$ with $n \ge 5$, it is always possible to find a sublattice $\widetilde{L}' \subseteq \widetilde{L}$ of finite index that splits: $\widetilde{L}' \simeq u_1 U \oplus u_2 U \oplus (-1) \widetilde{L}'_0$ with positive constants $u_1$ and $u_2$ (see, \cite{Bo98}, Section~8; This follows from the fact that any such lattice contains a non-zero isotropic vector, see \cite{Si51}, Satz~2 and the classification of local lattices, which is explained in~\cite{OM00}).  Since the orthogonal groups of $\widetilde{L}$ and $\widetilde{L}'$ are commensurable, the considerations in this paper apply to arbitrary Borcherds products for lattices of signature~$(2, n)$ with $n \ge 5$.
\end{remark}

\subsection{Borcherds products}
\label{ssec:borcherdsproducts}

We review the construction of Borcherds products, which dates back to~\cite{Bo95} and~\cite{Bo98}.  A more accessible discussion, valid in some interesting special cases, can be found in~\cite{Gr95, GN98}.  Given an elliptic vector valued modular form, we construct an orthogonal modular form.

Let $\HS := \{\tau \in \CC \,:\, \Im(\tau) > 0\}$ denote the Poincaré upper half-plane.  Since we will deal with half-integral weights,  we first have to define the metaplectic cover $\Mp{2}(\ZZ)$ of $\SL{2}(\ZZ)$.  It is the preimage of $\SL{2}(\ZZ)$ in $\Mp{2}(\RR)$, the connected double cover of $\SL{2}(\RR)$.  Recall that if $N$ is a positive integer then the principal congruence subgroup of level~$N$, denoted by~$\Gamma(N)$, is defined as the kernel of the natural projection map $\SL{2}(\ZZ) \rightarrow \SL{2}(\ZZ \slashdiv N \ZZ)$.  The principle congruence subgroup of level~$N$ of $\Mp{2}(\ZZ)$ is defined as the preimage of $\Gamma(N)$.

Write $g = \left(\begin{smallmatrix} a & b \\ c & d \end{smallmatrix}\right)$ for a typical element of $\SL{2}(\RR)$.  The elements of $\Mp{2}(\RR)$ can be written $\big(g,\, \tau \mapsto \sqrt{c \tau + d}\big)$, where the first component is an element of $\SL{2}(\RR)$ and the second is a holomorphic function on $\HS$.  Since there are two branches of the square root, this yields indeed a double cover of $\SL{2}(\RR)$.

Given a representation $(\rho, V_\rho)$ of $\Mp{2}(\ZZ)$, $k \in \tfrac{1}{2}\ZZ$, $F \,:\, \HS \rightarrow V_\rho$, and $(g, \omega) \in \Mp{2}(\ZZ)$, we define
\begin{gather*}
  \big( F\big|_{k,\rho} \, (g, \omega) \big)\, (\tau)
:=
  \omega(\tau)^{-2 k} \, \rho\big((g, \omega)\big)^{-1} \,
  F\Big( \frac{a \tau + b}{c \tau + d} \Big)
\text{.}
\end{gather*}
\begin{definition}
Let $(\rho, V_\rho)$ be a finite-dimensional representation of $\Mp{2}(\ZZ)$.  A weakly holomorphic vector valued modular form of type $\rho$ and weight $k \in \tfrac{1}{2}\ZZ$ is a holomorphic function $F \,:\, \HS \rightarrow V_\rho$ such that the following conditions are satisfied:
\begin{enumerate}
\item For all ${\td g} \in \Mp{2}(\ZZ)$ we have $F|_{k,\rho} \, {\td g} = F$.
\item The Fourier expansion of $F$ has the form
 \begin{gather*}
  \sum_{-\infty \ll m \in \QQ} f(m) \, e(m \tau),
 \quad
  f(m) \in V_\rho
 \text{.}
 \end{gather*}
\end{enumerate}
\end{definition}
\begin{remark}
If the weight~$k$ is integral and $\rho$ is trivial on some principal congruence subgroup, 
the map
\begin{gather*}
  \Mp{2}(\ZZ) \rightarrow \GL{}(V_\rho)
\text{,}\quad
  (g, \omega)
  \mapsto
  \omega(\tau)^{-2k} \rho\big( (g, \omega) \big)
\end{gather*}
factors through~$\SL{2}(\ZZ)$.
\end{remark}

The case that is most relevant to us is $\rho = \rho_L$, where $\rho_L$ is the Weil representation associated to the finite quadratic module $\disc\, L$.  The space $V_\rho$ has a canonical basis indexed by $\disc\, L$.  We refer the reader to \cite{Bo98, Sk08} for a definition and more details.  Suppose that $F$ is an elliptic modular form of type $\rho_L$.  Write
\begin{gather}
\label{eq:vvell_fourierexpansion}
  F_{\lambda}(\tau) = \sum_{-\infty \ll m} f( \ov{\lambda}, m)\, e( m \tau)
\end{gather}
for the Fourier expansion of the components $F_{\ov{\lambda}}$ of $F$, $\ov{\lambda} \in \disc\, L$.  We often write $f(\lambda, m)$ with $\lambda \in L^\#$, meaning $f(\ov{\lambda}, m)$ with $\ov{\lambda} \equiv \lambda \pmod{L}$.

Given a fixed basis $w_1,\ldots, w_{n-2}$ of $L_0 \otimes \RR$, we write $b \succ 0$ if there is $1 \le j \le n - 2$ such that $(b, w_j)_{L_0} > 0$ and $(b, w_{j'})_{L_0} = 0$ for all $1 \le j' < j$.  A triple $[a, b, c]$ is called positive, $[a, b, c] \succ 0$, if $c > 0$, or $c = 0$ and $a > 0$, or $a = c = 0$ and $b \succ 0$.  This definition is motivated by Weyl chambers discussed in \cite{Bo98}, Section~6.  We can find a Weyl chamber $W$ for $U \oplus (-1) L_0 \subset L$ such that $([a, b, c], W) = \big( (0, 0, a, c, b),\, W \big) > 0$ if and only if $[a, b, c] \succ 0$, where all $w_j$ are rational vectors.

Borcherds also defined a Weyl vector attached to $F$ and $W$.  It gives rise to the exponential factor~$W_F$ defined below.  In our setting (see also \cite{De01}, Satz~5.4), we have:
\begin{align*}
  a_W
&:=
  \frac{1}{24} \sum_{b \in L_0^\#} f\big(b, -q_{L_0}(b)\big)
\text{,}
&
  b_W
&:= 
  - \frac{1}{2} \sum_{ 0 \prec b \in L_0^\#} f\big(b, -q_{L_0}(b)\big) \cdot b
\text{,}
\\[4pt]
  c_W
&:=
 a_W - \sum_{n}^\infty \sigma_1(n) \sum_{b \in L_0^\#} f\big(b, - n - q_{L_0}(b)\big)
\text{,}
\\[4pt]
  W_F
&:=
  e\big(a_W \tau + c_W \tau' - (b_W,z)_{L_0}\big)
\text{.}
\end{align*}

With this notation at hand, we reformulate~\cite{Bo98}, Theorem~13.3.  Note that $\Psi_F$ is only well-defined up to a complex multiplicative scalar of norm~$1$.  In addition, we rescale $\Psi_F$ by a real factor which can be explicitly computed in terms of the $f(\lambda, 0)$.
\begin{theorem}
\label{thm:borcherds_products}
Fix a weakly holomorphic vector valued elliptic modular form $F$ of type $\rho_L$ and weight $(2 - n)/2$  with Fourier expansion~\eqref{eq:vvell_fourierexpansion} and integral $f(\lambda, m)$.  Then the following product converges for $(\tau, \tau', z) \in \RR^n + i W \subset \big(U \oplus (-1) L_0 \big) \otimes \CC$ and is an orthogonal modular form of weight $f(0, 0) \slashdiv 2$ for $\OrthL[\disc\, L]$:
\begin{gather}
\label{eq:borcherds_psif}
  \Psi_F
:=
  W_F
  \prod_{[a,b,c] \succ 0} \big(\, 1 - e\big(a \tau + c \tau' - (b, z)_{L_0}\big) \, \big)^{f(b, \disc([a,b,c]))}
\text{.}
\end{gather}
\end{theorem}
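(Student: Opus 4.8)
The statement to be proved is Theorem~\ref{thm:borcherds_products}, a reformulation of Borcherds' product theorem \cite{Bo98}, Theorem~13.3, specialized to a lattice $L = U \oplus U \oplus (-1)L_0$ and with an explicit description of the Weyl chamber positivity condition $[a,b,c] \succ 0$ and the Weyl vector data $a_W, b_W, c_W$. Since the hard analytic content---convergence of the product, modularity under $\OrthL[\disc\, L]$, and the weight $f(0,0)/2$---is exactly Borcherds' original theorem, the plan is to \emph{reduce} our statement to the original one by carefully translating notations and verifying that all normalizations match. So the proof is essentially a bookkeeping argument, but a nontrivial one.

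First I would recall the precise form of \cite{Bo98}, Theorem~13.3: given $L$ of signature $(2,n)$ with a primitive isotropic vector $z$ and a Weyl chamber $W$, the Borcherds lift of $F$ is a meromorphic modular form of weight $f(0,0)/2$ whose divisor and product expansion are given in terms of the Fourier coefficients $f(\lambda, m)$ with $m < 0$ (for the divisor) and the full product over $\lambda$ in the positive cone associated to $W$. The key step is to identify the abstract "positive cone with respect to $W$" appearing in Borcherds' formula with the concrete condition $[a,b,c] \succ 0$ defined via the ordered basis $w_1, \dots, w_{n-2}$. This is precisely the content of the sentence in the excerpt asserting that a Weyl chamber $W$ exists with $([a,b,c], W) > 0 \iff [a,b,c] \succ 0$; I would either cite \cite{Bo98}, Section~6, and \cite{De01}, Satz~5.4, for this, or supply the short argument that a generic choice of $W$ inside the positive cone induces the lexicographic-type ordering $\succ$ on the isotropic boundary data. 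Next I would match the exponents: Borcherds' product has exponent $f(\lambda, q_L(\lambda))$ attached to the factor indexed by $\lambda$, and under the identification $\lambda = (0,0,a,c,b) \leftrightarrow [a,b,c]$ one has $q_L(\lambda) = ac - q_{L_0}(b) = \disc([a,b,c])$ and $\overline{\lambda} \equiv b$ in $\disc\, L = \disc\, (-1)L_0$, so $f(\lambda, q_L(\lambda)) = f(b, \disc([a,b,c]))$, matching~\eqref{eq:borcherds_psif} exactly; and the linear form $(\lambda, \cdot)$ pairing on the tube domain becomes $a\tau + c\tau' - (b,z)_{L_0}$ by the computation with the canonical section already carried out in the excerpt (the Eichler-transformation display). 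Then I would check that the Weyl vector $\rho(W, F)$ of \cite{Bo98} has the three components $a_W, b_W, c_W$ displayed above; this is where I would lean on \cite{De01}, Satz~5.4, which gives exactly these formulas in the two-hyperbolic-plane setting, so the prefactor $W_F = e(a_W\tau + c_W\tau' - (b_W, z)_{L_0})$ agrees with Borcherds' $e((\rho(W,F), Z))$.

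The remaining points are the integrality hypothesis and the normalization. Borcherds' theorem requires $f(\lambda, m) \in \ZZ$ for $m \le 0$ to conclude that $\Psi_F$ is a genuine (rather than multivalued) modular form with a product expansion having integer exponents; our hypothesis that all $f(\lambda, m)$ are integral is stronger and suffices. The weight is $f(0,0)/2$ directly from Borcherds. Finally, Borcherds' lift is well-defined only up to a scalar of absolute value $1$ coming from the choice of Weyl chamber and an implicit constant of integration; the excerpt notes that we additionally rescale by an explicit real factor depending on the $f(\lambda,0)$, so I would remark that this fixes the scalar ambiguity and that the displayed product~\eqref{eq:borcherds_psif}, with leading term normalized by the $W_F$ factor, is the chosen representative. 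Assembling these identifications gives the theorem.

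I expect the \textbf{main obstacle} to be the clean verification that the combinatorial condition $[a,b,c]\succ 0$ (defined lexicographically: $c>0$, or $c=0 \wedge a>0$, or $a=c=0 \wedge b\succ 0$) really coincides with positivity of the pairing $([a,b,c], W)$ for a single fixed Weyl chamber $W$ with rational walls---in particular that such a $W$ can be chosen simultaneously for \emph{all} indices, which requires $W$ to lie in a suitable "corner" of the positive cone so that the vector $(0,0,1,0,0)$ dominates $(0,0,0,1,0)$ which in turn dominates the $L_0$-directions in the induced ordering. This is geometrically clear but needs the right limiting argument (choosing $W$ to approach the isotropic ray spanned by $z$ along a generic direction); I would isolate it as the one genuinely new lemma, with everything else being a translation of \cite{Bo98} and \cite{De01}.
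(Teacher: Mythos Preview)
Your proposal is correct and, in fact, does substantially more than the paper does: the paper gives no proof of this theorem at all.  It introduces the statement with ``we reformulate~\cite{Bo98}, Theorem~13.3'' and then simply states it, having already attributed the Weyl vector formulas $a_W, b_W, c_W$ to~\cite{De01}, Satz~5.4, and having already asserted (without proof, just before the theorem) that a Weyl chamber $W$ exists with $([a,b,c],W)>0 \iff [a,b,c]\succ 0$.  Your careful translation of notation, matching of exponents via $q_L((0,0,a,c,b)) = \disc([a,b,c])$, and identification of the main obstacle (that the lexicographic order $\succ$ really arises from a single rational Weyl chamber) are exactly the verifications a reader would have to supply, and your plan to isolate that last point as a lemma is reasonable; the paper simply takes all of this for granted.
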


By definition of~$W$, we have
\begin{gather*}
  \big| e\big( a \tau + c \tau' - (b, z)_{L_0} \big) \big| < 1
\end{gather*}
for all $[a, b, c] \succ 0$ and $(\tau, \tau', z) \in \RR^n + iW$.  Hence we can define a holomorphic logarithm of $\Psi_F$ on $\RR^n + i W$, which is non-zero there.
\begin{align}
\nonumber
{} &
  \Phi_F
=
  \log \Psi_F
\\
\label{eq:borcherds_phif}
= {} &
  \log W_F + \!\!\sum_{[a,b,c] \succ 0} f\big( b, \disc([a,b,c]) \big) \,
                                \log \big(\, 1 - e\big(a \tau + c \tau' - (b, z)_{L_0}\big)\,\big)
\text{.}
\end{align}
This logarithm will be crucial to our approach.


\section{An algorithm for Fourier expansions of Borcherds products}
\label{sec:algorithm}

We write $\ov{\QQ}\llbracket \cQ \rrbracket$ for the direct product of $1$-dimensional $\ov{\QQ}$ vector spaces indexed by $\cQ$.  Any element will be written as a possibly infinite sum of multiples of $e^{[a,b,c]}$ for $[a,b,c] \in \cQ$.  Here, $e^{[a,b,c]}$ is simply a convenient notation for basis vectors of this space.  We say that $[a, b, c]$ occurs non-trivially in an element of ${\ov \QQ}\llbracket \cQ \rrbracket$ if the coefficient of $e^{[a, b, c]}$ is nonzero.

Fix a vector valued elliptic modular form $F$ of type $\rho_L$ and weight $(2 - n) / 2$. Define
\begin{gather}
  {\widetilde \Phi}_F
:=
  - \!\!\! 
  \sum_{[a,b,c] \succ 0} f\big( b, \disc([a,b,c]) \big) \,
  \sum_{m = 1}^\infty \frac{e^{m [a,b,c]}}{m}
\in
  \ov{\QQ}\llbracket \cQ \rrbracket
\text{.}
\end{gather}
This is well-defined, since only finitely many term contribute to $[a',b',c']$-th component of ${\widetilde \Phi}_F$ for each~$[a',b',c'] \in \cQ$.  For example, if $c' > 0$, then only those $[a, b, c] \succ 0$ contribute that satisfy $c \le c'$ and $\disc([a, b, c]) > d$ for some $d$ that depends on $f$.  A reasoning similar to this will be used while proving Lemma~\ref{la:phif_powers}.

We can map this formal expansion to a function on $\RR^n + i W$, where as stated before, we have
\begin{gather*}
  \big| e\big( a \tau + c \tau' - (b, z)_{L_0} \big) \big| < 1
\end{gather*}
for all $[a, b, c] \succ 0$.  Clearly, ${\widetilde \Phi}_F$ is mapped to $\Phi_F - \log W_F$ as $e^{[a,b,c]}$ is mapped to $e\big( a \tau + c \tau' - (b, z)_{L_0} \big)$.

Note that $\ov{\QQ}\llbracket \cQ \rrbracket$ does not carry an algebra structure, since it contains $\ov{\QQ}\llbracket \{ [a, 0, 0] \,:\, a \in \ZZ \}\rrbracket$, which is isomorphic to the space of two-ended formal power series over $\ov{\QQ}$.  However, we can define powers of ${\widetilde \Phi}_F$.  Write ${\widetilde \phi}_F ([a,b,c])$ for the coefficient of $e^{[a,b,c]}$ in~${\widetilde \Phi}_F$.  We set
\begin{gather}
  {\widetilde \Phi}_F^l
:=
  \sum_{[a,b,c]} \bigg( \sum_{\sum_{i=1}^l [a_i,b_i,c_i] = [a,b,c]} \prod {\widetilde \phi}_F([a_i,b_i,c_i]) \bigg) \, e^{[a,b,c]}
\text{,}
\end{gather}
where the $[a_i,b_i,c_i]$'s are elements of $\cQ$.  By the next lemma, the inner sum is finite.
\begin{lemma}
\label{la:phif_powers}
Given $l \in \NN$, the set
\begin{gather*}
  \Big\{
   \big( [a_i,b_i,c_i] \big)_{i=1,\ldots,l} \in \cQ^l
  \,:\,
   \sum_{i=1}^l \, [a_i,b_i,c_i]
  =
   [a,b,c]
  ,\,
   {\widetilde \phi}_F ([a_i,b_i,c_i]) \ne 0
  \text{ for all } i
  \Big\}
\end{gather*}
is finite.
\end{lemma}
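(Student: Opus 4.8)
The plan is to show that each of the finitely many "coordinates" of a summand $\big([a_i,b_i,c_i]\big)_i$ satisfying the constraints can only take finitely many values, so that the whole set is finite. Recall that $\widetilde\phi_F([a_i,b_i,c_i])\neq 0$ forces $[a_i,b_i,c_i]=m_i[a_i',b_i',c_i']$ for some $m_i\geq 1$ and some $[a_i',b_i',c_i']\succ 0$ with $f\big(b_i',\disc([a_i',b_i',c_i'])\big)\neq 0$; in particular $[a_i,b_i,c_i]\succ 0$ for every $i$. So it suffices to bound, for each $i$, the possibilities for $a_i$, $c_i$ and $b_i$.

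First I would control the $a$- and $c$-components. Because $[a_i,b_i,c_i]\succ 0$ in the sense defined before Theorem~\ref{thm:borcherds_products}, each $[a_i,b_i,c_i]$ has $c_i\geq 0$; and if $c_i=0$ then $a_i\geq 0$, and if $a_i=c_i=0$ then $b_i\succ 0$. Since $\sum_i c_i=c$ and all $c_i\geq 0$, there are only finitely many choices for the tuple $(c_1,\dots,c_l)$. Likewise, restricting to the indices $i$ with $c_i=0$, the corresponding $a_i$ are $\geq 0$ and sum to at most $a$ (the other $a_i$ being constrained below), so those $a_i$ lie in a finite set as well. The remaining subtlety is that an $a_i$ with $c_i>0$ need not be nonnegative; to handle it I use that $f$ has a Fourier expansion bounded below, i.e. there is $d\in\QQ$ with $f(\lambda,m)=0$ whenever $m<d$. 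Applied to $[a_i',b_i',c_i']$ this gives $\disc([a_i',b_i',c_i'])=a_i'c_i'-q_{L_0}(b_i')\geq d$, hence $\disc([a_i,b_i,c_i])=m_i^2\,\disc([a_i',b_i',c_i'])\geq m_i^2 d$; combined with $c_i>0$, $a_ic_i\geq q_{L_0}(b_i)+m_i^2 d\geq m_i^2 d$ (using $q_{L_0}\geq 0$ since $L_0$ is positive definite), which bounds $a_i$ from below in terms of $c_i$ and $m_i$. Since $m_i\leq\sqrt{\disc([a_i,b_i,c_i])/d}$ when $d>0$ (and the case $d\leq 0$ is subsumed once one notes $a_ic_i\geq m_i^2 d$ forces nothing worse), and $\sum_i a_i=a$ with the finitely-many-choices lower bounds in place, the tuple $(a_1,\dots,a_l)$ ranges over a finite set.

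Next I would bound the $b$-components. Once $a_i$ and $c_i$ are fixed in one of finitely many ways, the relation $q_{L_0}(b_i)=a_ic_i-\disc([a_i,b_i,c_i])\leq a_ic_i - m_i^2 d$ together with $m_i\geq 1$ shows $q_{L_0}(b_i)$ is bounded above by a constant depending only on the already-finite data. Since $L_0$ is positive definite, $\{b\in L_0^\#:q_{L_0}(b)\leq C\}$ is finite for every $C$, so each $b_i$ ranges over a finite set. Finally, one observes $\sum_i b_i=b$ is automatically consistent, and there is no further freedom: the tuple $\big([a_i,b_i,c_i]\big)_{i=1}^l$ is pinned down by finitely many independent finite choices, hence the set in the statement is finite.

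The main obstacle is the $a_i$ with $c_i>0$: unlike the Hilbert modular case, here the individual $a_i$ are not a priori bounded below by $0$, so one genuinely needs the lower bound $-\infty\ll m$ on the Fourier expansion of $F$ (equivalently on the exponents $\disc([a_i',b_i',c_i'])$ that survive) together with positive-definiteness of $L_0$ to turn the product-of-discriminants identity $\disc([a_i,b_i,c_i])=m_i^2\disc([a_i',b_i',c_i'])$ into an effective constraint. Making the chain "$c_i$ bounded $\Rightarrow$ $m_i$ bounded $\Rightarrow$ $\disc([a_i,b_i,c_i])$ bounded $\Rightarrow$ $q_{L_0}(b_i)$ and $a_i$ bounded" fully rigorous, in the right order so that no step secretly presupposes a later one, is the part that needs care; everything else is bookkeeping with finitely many nonnegative integers summing to a fixed value.
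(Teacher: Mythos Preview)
Your argument correctly handles the cases $c_i>0$ and $c_i=0,\,a_i>0$, and its overall shape matches the paper's proof. However, there is a genuine gap in the case $a_i=c_i=0$, which you mention (``if $a_i=c_i=0$ then $b_i\succ 0$'') but never actually treat.

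In that case your bound reads $q_{L_0}(b_i)\le a_ic_i-m_i^2 d=-m_i^2 d$, which is vacuous when $d<0$ (the typical situation for a weakly holomorphic $F$) unless $m_i$ is bounded. But with $a_i=c_i=0$ you have no divisibility constraint like $m_i\mid c_i$ or $m_i\mid a_i$ to fall back on, and your remark ``the case $d\le 0$ is subsumed once one notes $a_ic_i\ge m_i^2 d$ forces nothing worse'' does not supply a bound on~$m_i$. Concretely, the finitely many primitive $b'\succ 0$ with $f(b',-q_{L_0}(b'))\ne 0$ generate rays $\{m b':m\ge 1\}$, and you must show that only finitely many $l$-tuples of points on these rays can sum to a fixed~$b$. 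This is not automatic: the relation $\succ$ is lexicographic with respect to $w_1,\ldots,w_{n-2}$, so ``positivity'' of $b_i$ in the first coordinate $(w_1,b_i)\ge 0$ does not by itself bound the later coordinates $(w_j,b_i)$ for $j>1$.

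The paper closes this gap by an induction on~$j$: one first bounds $(w_1,b_i)$ using nonnegativity and discreteness of the values $(w_1,\cdot)$ on $L_0^\#$; having bounded $(w_{j'},b_i)$ for all $j'<j$, one obtains a lower bound on $(w_j,b_i)$ for all the summands and hence an upper bound for the particular one under consideration, which in turn bounds $m_i$ via $(w_j,b_i)\ge m_i\epsilon$ and then $q_{L_0}(b_i)$. This inductive step, exploiting both the lexicographic structure of $\succ$ and the rationality of the $w_j$, is precisely the ``part that needs care'' you allude to in your last paragraph, but it is not the chain you wrote there---your chain never produces a bound on $m_i$ when $a_i=c_i=0$.
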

\begin{proof}
For reasons of symmetry, it suffices to prove that, given arbitrary $[a_i, b_i, c_i]$'s with $i \ne 1$, there are only finitely many $[a_1, b_1, c_1]$ that possibly contribute.  Assume that the tuple $\big( [a_i,b_i,c_i] \big)_{i=1,\ldots,n} \in \cQ^l$ is an element of the above set.

Since $F$ is a weakly holomorphic modular form, there is a lower bound $d$ such that $f(b, D) = 0$, whenever $D \le d$.  By definition of the relation $\succ$ and of ${\widetilde \Phi}_F$, only indices $[a_1,b_1,c_1]$ satisfying $c_1 \ge 0$ contribute.  In particular, we have $c_1 \le c$ for those $[a_1, b_1, c_1]$ that contribute.  Consider those indices with $c_1 > 0$.  If $\phi_F([a_1, b_1, c_1]) \ne 0$, we can write $[a_1, b_1, c_1] = m\, [\td a_1, \td b_1, \td c_1]$ with $m \in \NN$ and $\disc( [\td a_1, \td b_1, \td c_1] ) > d$.  This gives rise to the inequality $\disc( [a_1,b_1,c_1] ) > m^2 d$.  Because $m \le c_1 \le c$, we obtain the weaker inequality $\disc( [a_1,b_1,c_1] ) > c^2 d$.  From this, we deduce a lower bound $A_{\rm l}$ on $a_1$ for those $[a_1, b_1, c_1]$ with $c_1 > 0$ that contribute.  We can assume that $A_{\rm l} \le 0$, so that $a_1 \ge A_{\rm l}$ also holds in the case of $c_1 = 0$, as is explained later.  For reasons of symmetry $A_{\rm l}$ is then a lower bound on all other $a_i$'s, and there are exactly $(l - 1)$ of them.  From this we deduce the following upper bound
\begin{gather*}
  a_1
=
  a - \sum_{2 \le i \le n - 2} a_i
\le
  a - (l - 1) A_{\rm l}
:=
  A_{\rm u}
\text{.}
\end{gather*}
Note that this bound also holds, if $c_1 = 0$.  In the case~$c_1 > 0$, the above inequality for $\disc([a_1, b_1, c_1])$ implies that $q_{L_0}(b_1) < a_1 c_1 - c^2 d$.  There are only finitely many vectors~$b_1$ of any bounded length, since $L_0$ is definite.  Consequently, for fixed $a_1$ and $c_1$ only finitely many $b_1$ contribute.  We have thus proved the statement in the case of~$c_1 > 0$.

Consider the case~$c_1 = 0$.  By definition, we have $a_1 \ge 0$ if $[a_1,b_1,0] \succ 0$.  In particular, the above choice of $A_{\rm l}$ is justified also if $c_1 = 0$.  We have already found an upper bound $A_{\rm u}$ on $a_1$ as well.  If $a_1 > 0$, we have the factorization $[a_1, b_1, 0] = m [\td a_1, \td b_1, 0]$ with $m < A_{\rm u}$ and $\disc([\td a_1, \td b_1, 0]) > d$.  Hence $\disc([a_1, b_1, 0]) = -q_{L_0}(b_1) > A_{\rm u}^2 d$.  There are only finitely many $b_1$'s satisfying this bound.  This proves the statement in the case of $c_1 = 0$, $a_1 > 0$.

Consider the case~$a_1 = c_1 = 0$.  Then $b_1 \succ 0$.  Recall the vectors $w_1, \ldots, w_{n - 2} \in L_0 \otimes \QQ$ discussed in Section \ref{sec:preliminaries}.  Since $w_j$ is rational, the set of values $\big| (w_j, b_1) \big|$ with $b_1 \succ 0$ and $1 \le j \le n - 2$ is discrete in $\RR$.  Write $\epsilon$ for the minimal nonzero value of $\big| (w_j, b_1) \big|$, $b_1 \succ 0$.

Given $b_i$, we denote by $1 \le j(b_i) \le n - 2$ the integer satisfying
\begin{gather}
\label{eq:b1_condition}
  (w_{j(b_i)}, b_i) > 0
\quad\text{and}\quad
  (w_k, b_i) = 0
  \;\text{for all $1 \le k < j(b_i)$.}
\end{gather}
Given $1 \le j \le n - 2$, we consider all $b_i \succ 0$ with $j(b_i) = j$.  We will prove by induction on~$j$ that $\big|(w_k, b_i)\big|$ is bounded by some $B_{k\!,\, j} \ge 0$ for all $k$ with $j \le k \le n - 2$.  Given $j$, the induction hypothesis will be that $\big|(w_k, b_i)\big| \le B_{k\!,\, j(b_i)}$ for all $b_i$ with $j(b_i) < j$ and for all $j(b_i) \le k \le n - 2$.  Note that we do not need to treat the case $j = 1$ separately.

Given~$j$, we have $(w_j, b_i) \ge - \max_{1 \le j' < j} B_{j,\, j'}$ for all $b_i$.  Indeed, this follows from the induction hypothesis, if $j(b_i) < j$.  If $j(b_i) = j$, then $(w_j, b_i) > 0$ by definition of $j(b_i)$, and if $j(b_i) > j$, then $(w_j, b_i) = 0$, again by definition of $j(b_i)$.  We conclude that
\begin{gather*}
  (w_j, b_1)
=
  (w_j, b) - \sum_{2 \le i \le l} (w_j, b_i)
\le
  (w_j, b) + (l - 1) \max_{1 \le j' < j} B_{j, j'}
=:
  B_{j, j}
\text{.}
\end{gather*}
Since $0 < (w_j, b_1) \le B_{j, j}$, we can write $b_1 = m \widetilde{b}_1$ with $-q_{L_0}\big(\widetilde{b}_1\big) > d$ and $m \le \epsilon^{-1} B_{j, j}$.  From this, we deduce that $q_{L_0}(b_1) \le - \big( \epsilon^{-1} B_{j, j} \big)^2 d$.  Hence there are only finitely many $b_1$'s that contribute and satisfy~(\ref{eq:b1_condition}) for the given~$j$.  Since there are only finitely many such $b_1$'s, we get the desired bounds $B_{j',\, j}$ for $j < j' \le n - 2$.  This completes the induction.

We easily complete the proof by combining the bounds on $\big| (w_j, b_1) \big|$ and the discreteness of the values~$(w_j, b_1)$ in $\RR$.
\end{proof}

We decompose $\widetilde{\Phi}_F$ as
\begin{gather*}
  \frakA + \frakB + \frakC + \frakD + \frakE
\text{,}
\end{gather*}
where
\begin{align}
\label{eq:frakA_part}
  \frakA &:= \sum_{[a,b,c] > 0}\quad\;\;\;
             - f\big(b,\, \disc([a,b,c]) \big) \sum_{m = 1}^\infty \frac{e^{m [a,b,c]}}{m}
\text{,}
\allowdisplaybreaks
\\[6pt]
  \frakB &:= \sum_{[a,b,c] \not > 0 \,:\, a, c > 0} \!\!\!
             - f\big(b,\, \disc([a,b,c]) \big) \sum_{m = 1}^\infty \frac{e^{m [a,b,c]}}{m}
\text{,}
\allowdisplaybreaks
\\[6pt]
  \frakC &:= \sum_{[a,b,c] \not > 0  \,:\,  a \le 0,\, c > 0} \!\!\!\!\!\!\!\!\!\!
             - f\big(b,\, \disc([a,b,c]) \big) \sum_{m = 1}^\infty \frac{e^{m [a,b,c]}}{m} 
\text{,}
\allowdisplaybreaks
\\[6pt]
  \frakD &:= \sum_{[a,b,c] \,:\, a > 0,\, c = 0} \!\!\!\!\!
             - f\big(b,\, \disc([a,b,c]) \big) \sum_{m = 1}^\infty \frac{e^{m [a,b,c]}}{m} 
\text{,}
\\[6pt]
\allowdisplaybreaks
\label{eq:frakE_part}
  \frakE &:= \sum_{[a,b,c] \,:\, a = c = 0, b \succ 0} \!\!\!\!\!\!\!\!\!
             - f\big(b,\, \disc([a,b,c]) \big) \sum_{m = 1}^\infty \frac{e^{m [a,b,c]}}{m} 
\text{.}
\end{align}
We can further decompose $\frakE = \sum_{j = 1}^{n - 2} \frakE_j$:
\begin{gather*}
  \frakE_j := \sum_{[a,b,c] \,:\, a = c = 0, b \succ 0} \!\!\!\!\!\!\!\!\!
              - f\big(b,\, \disc([a,b,c]) \big) \sum_{m = 1}^\infty \frac{e^{m [a,b,c]}}{m}
\text{,}
\end{gather*}
where the sum only ranges over those $b$ satisfying $(w_j, b) > 0$ and $(w_{j'}, b) = 0$ for all $1 \le j' < j$.  It is not only necessary to make use of this decomposition in order to formulate Algorithm~\ref{alg:borcherdsproduct}, but it will also help us to prove Theorem~\ref{thm:formal_borcherds_convergence}.

Recall that $M$ is the discriminant kernel of $U \oplus L_0$ in $\Gamma$.  In particular, $M$ preserves $\disc([a, b, c])$ and $\ov{b} \in \disc\, L_0$.  Hence $\frakA$ is $M$-invariant.  

The next theorem is used implicitly, when computing Borcherds products, whether or not Borcherds' formula is evaluated only naively.  Nevertheless, no proof seems to be available in the literature.
\begin{theorem}[Formal Borcherds convergence theorem]
\label{thm:formal_borcherds_convergence}
The exponential
\begin{gather}
  {\widetilde \Psi}_F
:=
  W_F \, \exp({\widetilde \Phi}_F)
= 
  W_F
  \sum_{l = 0}^\infty
  \frac{1}{l!}\, {\widetilde \Phi}_F^l
\end{gather}
is well-defined in the sense that, given $[a,b,c]$, the coefficient of $e^{[a,b,c]}$ in ${\widetilde \Phi}_F^l$ is nonzero for only finitely many $l$.

Moreover, for any $[a,b,c]$ the coefficient of $e^{[a,b,c]}$ in ${\widetilde \Phi}_F$ equals the Fourier coefficient of $e\big(a \tau + c \tau' - (b, z)_{L_0} \big)$ in $\Phi_F$.
\end{theorem}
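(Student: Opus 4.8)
The plan is to treat the two assertions separately; the formal well-definedness is the substantial one, and the identification of coefficients will then follow from the convergence already recorded in Theorem~\ref{thm:borcherds_products}. For the first assertion I would fix $[a,b,c]\in\cQ$ and observe that every index occurring nontrivially in $\widetilde\Phi_F$ is a positive integral multiple of an index $\succ 0$, hence itself $\succ 0$; since positivity against a fixed Weyl chamber is additive, the coefficient of $e^{[a,b,c]}$ in $\widetilde\Phi_F^l$ can be nonzero only when $[a,b,c]\succ 0$ (for $l=0$ only when $[a,b,c]=0$, which is not $\succ 0$). So it suffices, for $[a,b,c]\succ 0$, to bound the number $l$ of summands in any contributing decomposition $\sum_{i=1}^l[a_i,b_i,c_i]=[a,b,c]$, with all $\widetilde\phi_F([a_i,b_i,c_i])\ne 0$, by a quantity depending only on $[a,b,c]$ and $F$. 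Using the disjoint partition \eqref{eq:frakA_part}--\eqref{eq:frakE_part}, each $[a_i,b_i,c_i]$ falls in exactly one of the classes $\frakA,\frakB,\frakC,\frakD,\frakE$, so $l=l_\frakA+l_\frakB+l_\frakC+l_\frakD+l_\frakE$ with $l_\bullet$ the respective counts, and I fix a lower bound $d<0$ with $f(b,D)=0$ whenever $D\le d$.

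Since the classes $\frakA,\frakB,\frakC$ all have $c_i\ge 1$ while $c_i\ge 0$ always, comparing third coordinates gives $l_\frakA+l_\frakB+l_\frakC\le\sum_i c_i=c$. For $l_\frakD$ I would first note that a class-$\frakC$ index has $a_i\le 0$, $1\le c_i\le c$, and --- writing $[a_i,b_i,c_i]=m_i[\wtd{a}_i,\wtd{b}_i,\wtd{c}_i]$ with $[\wtd{a}_i,\wtd{b}_i,\wtd{c}_i]\succ 0$, $\disc([\wtd{a}_i,\wtd{b}_i,\wtd{c}_i])>d$ and $m_i\le c_i$ --- satisfies $a_ic_i\ge\disc([a_i,b_i,c_i])>m_i^2 d\ge c_i^2 d$, whence $a_i>cd$; summing over the at most $c$ class-$\frakC$ indices gives $\sum_\frakC a_i\ge c^2 d$. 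Because class-$\frakA,\frakB,\frakD$ indices have $a_i\ge 1$, comparing first coordinates then yields $\sum_\frakD a_i=a-\sum_\frakA a_i-\sum_\frakB a_i-\sum_\frakC a_i\le a-c^2 d$, so $l_\frakD\le a-c^2 d$; the same estimate bounds $a_i\le a-c^2 d$ for every class-$\frakA,\frakB,\frakD$ index.

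The main obstacle is bounding $l_\frakE$, the number of class-$\frakE$ indices $[0,b_i,0]$ with $b_i\succ 0$; here I would follow the scheme of the proof of Lemma~\ref{la:phif_powers}. The bounds just obtained, together with $0\le q_{L_0}(b_i)<a_ic_i-m_i^2 d$ and the boundedness of $a_ic_i$ and of $m_i$, confine the components $b_i$ of the non-$\frakE$ indices to a finite subset of $L_0^\#$, and since there are at most $c+(a-c^2 d)$ such indices, their sum ranges over a finite set; hence so does $\sum_{i\in\frakE}b_i$. Fixing one such value $\beta$ and partitioning the class-$\frakE$ indices by the integer $j(b_i)$ of \eqref{eq:b1_condition}, I would show by induction on $j$ that the number of indices with $j(b_i)=j$, and the values $(w_k,b_i)$ for such $i$ and $k\ge j$, are bounded: the identity $\sum_{j(b_i)=j}(w_j,b_i)=(w_j,\beta)-\sum_{j(b_i)<j}(w_j,b_i)$ bounds its left side by the induction hypothesis, each term on the left is at least the minimal positive value $\epsilon$ of $(w_j,\cdot)$ on $L_0^\#$ (discreteness, as in Lemma~\ref{la:phif_powers}), which bounds the count, and writing $b_i=m_i\wtd{b}_i$ with $-q_{L_0}(\wtd{b}_i)>d$ forces $m_i\le\epsilon^{-1}(w_j,b_i)$ and hence $q_{L_0}(b_i)=m_i^2 q_{L_0}(\wtd{b}_i)$ bounded, placing $b_i$ in a finite set and closing the induction. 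Summing over $j$ bounds $l_\frakE$, hence $l$, which finishes the first assertion.

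For the second assertion I would expand each logarithm in \eqref{eq:borcherds_phif} as $-\sum_{m\ge 1}\tfrac1m e\big(m(a\tau+c\tau'-(b,z)_{L_0})\big)$, which is legitimate on $\RR^n+iW$ since $|e(a\tau+c\tau'-(b,z)_{L_0})|<1$ there. The absolute and locally uniform convergence of the Borcherds product (Theorem~\ref{thm:borcherds_products}) then permits rearranging the resulting double series, and collecting the coefficient of $e(a\tau+c\tau'-(b,z)_{L_0})$ produces exactly $\widetilde\phi_F([a,b,c])$; thus $\widetilde\Phi_F$ maps to $\Phi_F-\log W_F$ as a genuine periodic Fourier expansion. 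Since $\log W_F=2\pi i\,(a_W\tau+c_W\tau'-(b_W,z)_{L_0})$ is linear, it contributes to the coefficient of no exponential $e(a\tau+c\tau'-(b,z)_{L_0})$; hence, by uniqueness of Fourier expansions, the coefficient of that exponential in $\Phi_F$ equals $\widetilde\phi_F([a,b,c])$, i.e.\ the coefficient of $e^{[a,b,c]}$ in $\widetilde\Phi_F$, as claimed.
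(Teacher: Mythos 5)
Your argument is correct, and for the substantial first assertion it is essentially the paper's own proof: the paper likewise fixes the decomposition \eqref{eq:frakA_part}--\eqref{eq:frakE_part}, bounds the total number of factors coming from $\frakA$, $\frakB$, $\frakC$ by $c$, bounds the number of $\frakD$-factors via a lower bound on the $a$-components forced by the cutoff $f(b,D)=0$ for $D\le d$, and then bounds the $\frakE_j$-multiplicities by induction on $j$ using the discreteness constant $\epsilon$ --- exactly your scheme, except that the paper phrases it as bounding the exponents $\alpha,\beta,\gamma,\delta,\eta_j$ in $\frakA^\alpha\frakB^\beta\frakC^\gamma\frakD^\delta\prod_j\frakE_j^{\eta_j}$ and defers most estimates to ``inspecting the proof of Lemma~\ref{la:phif_powers}'', whereas you make the constants explicit (e.g.\ $a_i>cd$ for $\frakC$-indices, $l_\frakD\le a-c^2d$, finiteness of the possible values of $\sum_{i\in\frakE}b_i$); so your write-up is a more self-contained rendering of the same argument. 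The one divergence is the ``moreover'' clause: you read it literally as identifying the coefficients of $\widetilde\Phi_F$ with those of $\Phi_F-\log W_F$ and prove this directly by expanding the logarithms and rearranging the absolutely convergent double series, without invoking the first part; the paper instead deduces the second assertion \emph{from} the first part together with Borcherds' convergence theorem, which indicates that the intended content is the identification of the coefficients of the formal exponential $\widetilde\Psi_F$ with the Fourier coefficients of $\Psi_F$ (this is what Algorithm~\ref{alg:borcherdsproduct} actually relies on), the termwise statement for $\widetilde\Phi_F$ having already been recorded before the theorem. This is a difference of reading rather than a gap: with part one in hand, the same absolute convergence you quote justifies exponentiating the expansion termwise, so your argument yields the $\widetilde\Psi_F$ versus $\Psi_F$ identification with one additional sentence.
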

\begin{proof}
The second part follows from the first part, since by Theorem~\ref{thm:borcherds_products}, due to Borcherds, the product expansion of $\Psi_F$ converges locally uniformly and absolutely on $\RR^n + i W$.

The first part of Theorem~\ref{thm:formal_borcherds_convergence} can be shown by analyzing the proof of Lemma~\ref{la:phif_powers}.  Our arguments build upon the above decomposition of~$\widetilde{\Phi}_F$.  Fix $[a, b, c] \in \cQ^+$.  Consider the product
\begin{gather}
\label{eq:formal_borcherds_decomposition}
  \frakA^\alpha \, \frakB^\beta \, \frakC^\gamma \, \frakD^\delta \,
  \prod_{j = 1}^{n - 2} \frakE_j^{\eta_j}
\text{.}
\end{gather}
It suffices to give upper bounds on $\alpha$, $\beta$, $\gamma$, $\delta$, and $\eta_1, \ldots, \eta_{n - 2} \in \NN$ for those products in which $[a,b,c]$ occurs non-trivially.

Since ${\td c} \ge 0$ for all $[{\td a}, {\td b}, {\td c}] \succ 0$, we conclude that $\alpha + \beta + \gamma \le c$.  That is, we have found upper bounds for $\alpha$, $\beta$, and $\gamma$, which we, henceforth, assume to be fixed.  We are reduced to proving that there are upper bounds for $\delta$ and $\eta_1, \ldots, \eta_{n - 2}$.

In analogy to the proof of Lemma~\ref{la:phif_powers}, we can find $A \in \ZZ$ such that ${\td a} \ge A$ for all $[{\td a}, {\td b}, c] \succ 0$ that occur non-trivially in $\frakA^\alpha \, \frakB^\beta \, \frakC^\gamma$.  We deduce that $\delta \le a - A$.

Henceforth, we assume that $\delta$ is fixed, and we will prove that there are only finitely many $\eta_1, \ldots, \eta_{n - 2}$ leading to a nonzero coefficient of $[a, b, c]$ in the product~\eqref{eq:formal_borcherds_decomposition}.  We use induction on the index~$j$ of $\eta_j$.  Given~$j$, assume that we have bounds on $\eta_{j'}$ for all $1 \le j' < j$.  For simplicity fix $\eta_{j'}$ for these $j'$.  Inspecting the proof of Lemma~\ref{la:phif_powers}, we find a lower bound on $(w_j, {\td b})$ that holds for all $[a, {\td b}, c]$ that satisfy $(w_{j'}, {\td b}) = (w_{j'}, b)$ for all $1 \le j' < j$ and that occur non-trivially in
\begin{gather*}
  \frakA^\alpha \, \frakB^\beta \, \frakC^\gamma \, \frakD^\delta \, \prod_{1 \le j' < j} \frakE_{j'}^{\eta_{j'}}
\text{.}
\end{gather*}
For all $[{\td a}, {\td b}, {\td c}]$ that occur non-trivially in $\frakE_{j'}$, $j < j' \le n - 2$ we have $(w_{j'}, {\td b}) = 0$.  Since, in addition, $(w_j, {\td b}) > \epsilon$ (with $\epsilon$ taken from the proof of Lemma~\ref{la:phif_powers}) for all $[0, {\td b}, 0]$ that occur non-trivially in $\frakE_j$, we obtain a bound on $\eta_j$.  This completes the proof.
\end{proof}

Choose functions
\begin{gather*}
  w_{j, \max} : \NN \rightarrow \RR
\text{,}\quad
  w_{j, \max}(B)
\ge
  \max_{b \in L_0,\, q_{L_0}(b) < B} \big| (w_j, b) \big|
\text{.}
\end{gather*}
The optimal choices for $w_{j, \max}$ are invariants of the lattice~$L_0$.  In practice, it seems sufficient to give an estimate computed by embedding $L_0$ into a lattice that is diagonal with respect to $w_1, \ldots, w_{n - 2}$.  We write $\xi_\alpha$ for the Fourier coefficients of $\Xi_\alpha \in \ov{\QQ}\llbracket \cQ \rrbracket$.

\begin{algorithm}
\label{alg:borcherdsproduct}
Assume that $F$ is a vector valued modular form of weight $(n - 2) \slashdiv 2$ and type $\rho_{L}$.  Suppose that $f(b, d) = 0$ if $d \le d_{\min}$.  Given $0 < B \in \ZZ$, set $a_{\rm neg} = - (B - c_W - 1)^2 \, d_{\min}$.  Then we need the Fourier coefficient $f(b, d)$ for $d \le D := \max \{0, (B - 1 - a_W + a_{\rm neg}) (B - 1 - c_W) \}$ in order to compute all Fourier coefficients $\widetilde{\phi}_F([a, b, c])$ of ${\widetilde \Phi}_F$ with $a, c < B$ using the following algorithm.
\begin{enumerate}[1:]
\item \label{it:truncate_frakA_frakB}
      Truncate $\frakA$ and $\frakB$ to indices $[a, b, c]$ with $a < B - a_W + a_{\rm neg}$ and $c < B - c_W$.  Truncate $\frakC$ to indices $[a, b, c]$ with $c < B - c_W$.
\vspace{0.5ex}

\item For all $0 \le \alpha < B - c_W$, set 
  \begin{gather*}
    \Xi_{\alpha}
  \leftarrow
    \sum \,
    \frac{1}{\alpha! \beta! \gamma!} \, \frakB^\beta\, \frakC^\gamma
  \text{,}
  \end{gather*}
  where the sum ranges over $0 \le \beta, \gamma < B - c_W - \alpha$.

\item Set $a_{\min} \leftarrow \min \big\{a \,:\; \exists \alpha, [a,b,c] \text{ such that } \xi_\alpha \big( [a,b,c] \big) \ne 0 \big\}$.
\vspace{0.5ex}

\item \label{it:delta_truncation}
      Truncate $\frakD$ to indices $[a, b, c]$ with $a < B - a_W - a_{\min}$.
\vspace{0.5ex}

\item For all $0 \le \alpha < B - c_W$, set 
  \begin{gather*}
    \Xi_\alpha \leftarrow \Xi_\alpha \cdot \sum \,
                                         \frac{1}{\delta!} \, \frakD^\delta
  \text{,}
  \end{gather*}
  where the sum ranges over~$0 \le \delta < B - a_W - a_{\min}$.
\vspace{0.5ex}

\item \label{it:b_component_iteration}
      For $1 \le j \le n - 2$ process the following loop.

\begin{enumerate}[{\ref{it:b_component_iteration}.1}]

\item Set
  \begin{align*}
    a_{\min}
  \leftarrow {} &
     \min \big\{a \,:\; \exists \alpha, [a,b,c] \text{ such that } \xi_\alpha \big( [a,b,c] \big) \ne 0 \big\}
  \text{,}
  \\
    b_{\min}
  \leftarrow {} &
     \min \big\{(w_j, b) \,:\; \exists \alpha, [a,b,c] \text{ such that } \xi_\alpha \big( [a,b,c] \big) \ne 0 \big\}
  \text{,}
  \\
    c_{\min}
  \leftarrow {} &
     \min \big\{c \,:\; \exists \alpha, [a,b,c] \text{ such that } \xi_\alpha \big( [a,b,c] \big) \ne 0 \big\}
  \text{,}
  \\
    b_{\rm tru}
  \leftarrow {} &
    b_{\min} + (w_j, b_W)
    + w_{j, \max} \big((B - 1)^2 + 1\big)
  \\
  {} &
    + w_{j, \max}\big( (B - 1 - a_W - a_{\min}) (B - 1 - c_W - c_{\min}) + 1 \big)
  \text{.}
  \end{align*}

\item Truncate $\frakE_j$ to indices $[a, b, c]$ satisfying $(w_j, b) \le b_{\rm tru}$.
\vspace{0.5ex}

\item \label{it:mult_delta}
      For all $0 \le \alpha < B - c_W$, set 
  \begin{gather*}
    \Xi_\alpha \leftarrow \Xi_\alpha \cdot \sum_{0 \le \eta_j \le b_{\rm tru}}
                                         \frac{1}{\eta_j!} \, \frakE_j^{\eta_j}
  \text{.}
  \end{gather*}
\vspace{0.5ex}
\end{enumerate}

\item \label{it:finial_computation}
      Compute 
  \begin{gather*}
    \cF\cE
  \leftarrow
    e^{[a_W, b_W, c_W]}
    \sum_{0 \le \alpha < B - c_W}
      \Xi_\alpha \, \frakA^\alpha
n  \text{,}
  \end{gather*}
  and truncate it to indices $[a, b, c]$ satisfying $a, c < B$.  
\end{enumerate}
Then $\cF\cE$ represents the truncated Fourier expansion of $\widetilde{\Phi}_F$. 
\end{algorithm}
\begin{proof}[Proof of correctness]
In order to see that the given Fourier coefficients $f(b, d)$ suffice to compute the Fourier expansion~$\cF\cE$, note that the $f(b, d)$ with $d > 0$ only contribute to the expressions of $\frakA$.  Then the statement follows from inspection of Step~\ref{it:truncate_frakA_frakB}.

Correctness follows when making the bounds used in the proof of Lemma~\ref{la:phif_powers} and Theorem~\ref{thm:formal_borcherds_convergence} explicit.  Since the important arguments are already given there, we content ourselves with sketching how to do this.

The factor $e^{[a_W, b_W, c_W]}$ is taken into consideration only in the last step.  Hence we have to prove that
\begin{gather}
\label{eq:algorithm:proof}
  \sum_{0 \le \alpha < B - c_W}
    \Xi_\alpha \, \frakA^\alpha
\end{gather}
represents the Fourier expansion $\cF\cE'$ of $\widetilde{\Psi}_F \slashdiv e^{[a_W, b_W, c_W]}$ for all $[a, b, c]$ with $a < B - a_W$ and $c < B - c_W$.  Only terms of index $[a, b, c]$ with $c < B - c_W$ contribute to $\cF\cE'$, since $c \ge 0$, if $[a, b, c]$ occurs non-trivially in $\frakA$, $\frakB$, or $\frakC$.  Hence it is correct to truncate $\frakA$, $\frakB$, and $\frakC$ to $c < B - c_W$.  In the same way, we can see that we only have to consider powers of $\frakA$ with exponent less than $B - c_W$.  Since $\cF\cE = \sum_\alpha \frakA^\alpha \, \Xi_\alpha$ and $c \ge 1$ for all $[a, b, c]$ which occur non-trivially in $\frakA$, we only have to consider powers of $\frakB$ and $\frakC$ with exponents less than $B - c_W - \alpha$ when computing $\Xi_\alpha$. 

One has to take $\frakC$ into account in order to see that truncating $\frakA$ and $\frakB$ to $a < B - a_W - a_{\rm neg}$ is correct.  The minimal value of $a$ for $[a, b, c]$ that occurs non-trivially in any power of $\frakC$ and contributes to $\cF\cE'$ is bounded by $a_{\rm neg}$.  Since $a > 0$ for terms that occur non-trivially in $\frakA$ or $\frakB$, also this truncation is correct.  Hence when Step~\ref{it:delta_truncation} is processed, then contributions of $\frakB$ and $\frakC$ to $\cF\cE$ have been computed correctly.

When processing Step~\ref{it:delta_truncation}, the minimal value of $a$ for indices $[a, b, c]$ which occur in any $\frakA^\alpha \Xi_\alpha$, $0 \le \alpha < B - c_W$ is $a_{\min}$.  Since $a > 0$ for any $[a,b,0]$ that occurs non-trivially in $\frakD$, we have to take into consideration powers of $\frakD$ with exponent less than $B - a_W - a_{\min}$.  The terms of $\frakD$ that contribute satisfy $a < B - a_W - a_{\min}$.

The contributions of $\frakE_j$ can be analyzed along the same lines.  We content ourselves with explaining the terms that occur in the expression for $b_{\rm tru}$.  The first term is the minimal value of $(w_j, b)$ in $\Xi_\alpha$.  The second term takes the Weyl vector into account.  The third term in the expression for $b_{\rm tru}$ corresponds to the maximal value of~$|(w_j, b)|$ for those $[a, b, c]$ that occur non-trivially in $\cF\cE$.  The fourth term corresponds to the maximal value of~$|(w_j, b)|$ for those $[a, b, c]$ that occur non-trivially in any $\frakA^\alpha$.
\end{proof}

\begin{remark}
Considering each step separately, one easily sees that Algorithm~\ref{alg:borcherdsproduct} has polynomial runtime in $B$.  See the introduction for a discussion of evaluation based on naive multiplication.
\end{remark}


\section{Borcherds products for the Hermitian modular group}
\label{sec:hermitianborcherdsproducts}

The authors provide an implementation of Algorithm~\ref{alg:borcherdsproduct} for Hermitian modular forms over the imaginary quadratic fields $\QQ(\sqrt{D})$ with $D$ a fundamental discriminant.  It is written in Sage~\cite{sage}, and builds on a framework presented in~\cite{Ra11}, which provides a model for general Fourier expansions.  For $D \ne -3$ the framework currently provides only limited support, resulting in restricted functionality of our implementation.  Both the implementation of the discussed algorithm and the Fourier expansion framework are part of the last named author's branch of Purple Sage~\cite{purplesage, psagerepository}.  In Section~\ref{ssec:using_implementation}, we will explain how to install and use them.

We briefly describe the model for the Fourier expansion that we have chosen.  We follow the notation introduced by Braun~\cite{Br49, Br50, Br51}.  Hermitian modular forms in her sense can also be described as orthogonal modular forms for~$\Orth{2}{4}$.  The lattice $L$ that corresponds to the case of Hermitian modular forms is $L = U \oplus U \oplus (-1) L_0$, where $L_0$ is the integers in $\QQ(\sqrt{D})$ together with the quadratic form $q_{L_0}(a) = a \ov{a} \in \ZZ$.  In the case of $D = -3$, which we will deal with throughout this section, we can choose $L_0$ to be a lattice with Gram matrix $\left(\begin{smallmatrix} 2 & 1 \\ 1 & 2 \end{smallmatrix}\right)$.  Hermitian modular forms are functions of four complex variables, written as $Z \in \Mat{2}{\CC}$ satisfying \mbox{$\big(Z - \ov{Z}^\tr\big) \slashdiv i > 0$}.  Denote the entries of $Z$ by $z_{kl}$, $1 \le k,l \le 2$.  Then $z_{12} + z_{21} + j (z_{12} - \ov{z_{21}}) \in \CC \otimes \CC_j$ corresponds to $z \in L_0 \otimes \CC_j$.  Here, $\CC_j$ denotes the field of complex numbers $x + j y$, $j^2 = -1$, $x, y \in \RR$, and $L_0$ is identified with $\CC$ as a quadratic space via the basis $1,\, (D + \sqrt{D})\slashdiv 2$.  For more details on the correspondence between orthogonal modular forms for $L$ and Hermitian modular forms and for proofs of what we state in the next two paragraphs, the reader is referred to~\cite{De01}.  We have, however, chosen notation so that notions in the former setting can be easily translated to notions in the latter.  In particular, the Fourier terms $\exp(2 \pi i\, \tr([a, b, c] Z))$ in the Hermitian setting correspond to the Fourier terms of index $[a, b, c]$ in the previous section.

The Fourier expansion of an Hermitian modular form without character is indexed by Hermitian matrices $[a, b, c] := \left(\begin{smallmatrix}a & b \\ \ov{b} & c\end{smallmatrix}\right)$, where $a, c \in 2 \ZZ$ and $b \in \cO^\#$.  By $\cO$ we denote the ring of integers in $\QQ\big( \sqrt{D} \big)$, and $\cO^\#$ is the inverse different of $\QQ\big( \sqrt{D} \big)$.  We store such matrices as quadruples $(a, b_1, b_2, c)$, where $b$ is represented by a pair $(b_1, b_2)$ of coordinates with respect to the basis $1 \slashdiv \sqrt{D},\, (1 + \sqrt{D}) \slashdiv 2$ of $\cO^\#$.  Fourier expansions of Hermitian modular forms are almost invariant under $\GL{2}(\cO)$.  Given such a Fourier expansion $\sum a(T) \exp(2 \pi i\, \tr(T Z))$, we have $a(T) = \det(\ov{U})^{k} a\big( \ov{U}^\tr T U \big)$ for all $U \in \GL{2}(\cO)$.  We have chosen a Weyl chamber for~$L$ such that the condition $b \succ 0$, which was described in Section~\ref{ssec:borcherdsproducts}, is equivalent to \mbox{$b_2 < 0 \vee (b_2 = 0 \wedge b_1 < 0)$}.  The associated Weyl chamber, as a subset of $(U \oplus (-1)\cO^\#) \otimes \RR$, where $\cO^\#$ has basis $\frac{1}{\sqrt{D}}$, $\frac{1 + \sqrt{D}}{2}$ as above, contains the set
\begin{gather*}
  \big\{
  \big( 1, x, x^2 (-D + \tfrac{D^2 - D}{2} x), x^2 (2 - D x) \big)
  \,:\, 0 < x < \epsilon
  \big\}
  \text{,}
\end{gather*}
provided that $\epsilon$ is sufficiently small.  In order to check that the above is indeed always contained in some Weyl chamber, adjust the arguments on page~105 of~\cite{De01}.

The implementation follows Algorithm~\ref{alg:borcherdsproduct} closely, except that unnecessary parts of $\Xi_\alpha$ are truncated after each step.  Computing powers of $\frakA$ is the bottleneck of our implementation.  As mentioned in the previous section, $\frakA$ is invariant under the action of $M$, and we have $M \cong \GL{2}(\cO)$.  We use this to speed up the computations.  The multiplication of two $\GL{2}(\cO)$-invariant Fourier expansions is performed by the Fourier expansion framework.  Its most time consuming parts are implemented in Cython.

The implementation of the last step of Algorithm~\ref{alg:borcherdsproduct} also relies on $\GL{2}(\cO)$ invariant Fourier expansions.  This invariance allows for a significant reduction of the number of coefficients that we need to calculate.  That is, we write the product
\begin{gather*}
  \sum_{[a, b, c]} g([a, b, c]) \exp(2 \pi i \, \tr([a, b, c] Z))
  \cdot
  \sum_{[a, b, c]} h([a, b, c]) \exp(2 \pi i \, \tr([a, b, c] Z))
\end{gather*}
as
\begin{gather*}
  \sum_{[a, b, c]} \Big( \sum_{[a, b, c] = [a_1, b_1, c_1] + [a_2, b_2, c_2]} \hspace{-2.5em} g([a_1, b_1, c_1]) \, h([a_2, b_2, c_2]) \Big) \exp(2 \pi i \, \tr([a, b, c] Z))
\text{.}
\end{gather*}
Now, the inner expression needs to be calculated only for few $[a, b, c]$.  For more details on the implementation, the reader it referred to comments in the source code.

\subsection{Runtimes and tests}
As a first demonstration of our implementation, we illustrate how it compares to naive multiplication of all factors in~(\ref{eq:borcherds_psif}).  More precisely, each factor is truncated and the resulting polynomials are multiplied using FLINT.  In addition, we have applied intermediate truncation to avoid too large polynomials.  Bounds on the Fourier indices $[a, b, c]$, that we needed in order to perform this truncation, can essentially be found in Section~\ref{sec:algorithm}.  We have imposed a simplified version of these bounds, that does not depend on the decomposition of~(\ref{eq:borcherds_psif}) into~$\frakA$, $\frakB$, etc.  The precise bounds can be found in the file {\tt phi\_45\_naive\_computation.sage} at~\cite{RaHomepage}.

The Hermitian modular form~$\Phi_{45}$ occurred in~\cite{De01} and~\cite{DK03} as a generator of the graded ring of Hermitian modular forms over $\QQ(\sqrt{-3})$.  Table~\ref{tab:runtimes} shows times needed to compute the Fourier expansion $\sum_{[a, b, c]} \Phi_{45}([a, b, c]) \, \exp\big( 2 \pi i\, \tr([a,b,c] Z) \big)$ of~$\Phi_{45}$.  The columns correspond to precisions: We have computed the Fourier coefficients $\phi_{45}([a, b, c])$ for all $a, c < B$ and all~$b$.  The first row contains a list of times consumed by an implementation based on naive multiplication, the second row contains a list of times consumed by our implementation.

In order to test our implementation for correctness, we have compared the results of both computations for precision~$8$.  Some of the coefficients are given in Table~\ref{tab:phi45_coefficients}.

\begin{table}[h]
\begin{tabular}{l@{\hspace{1.5em}}lllll}
\toprule
$B$ & $\;5$ & $\;6$ & $\;7$ & $\;8$ & $\; 9$ \\
\midrule
Number of computed coefficients & $1011$      & $2353$
                                  & $4627$      & $8301$
                                  & $13765$ \\[4pt]
Multiplication                    & $0.05 \,{\rm s}$ &  $4.9 \,{\rm s}$
                                 & $ 290.0 \,{\rm s}$ & $142.6\, {\rm min}$ 
                                 & \;--- \\
Algorithm~\ref{alg:borcherdsproduct}  & $0.20 \,{\rm s}$ &  $0.7 \,{\rm s}$ 
                                 & $ \hphantom{00}3.8 \,{\rm s}$ & $\hphantom{0}19.7\, {\rm s}$ 
                                 & $93.9 {\rm s}$ \\
\bottomrule \\
\end{tabular}
\caption{Time needed to compute the coefficients of $\Phi_{45}$ for indices with diagonal entries bounded by $B$.}
\label{tab:runtimes}
\end{table}

\begin{table}[h]
\begin{tabular}{llllll}
\toprule
$[a, b_1, b_2, c]$ & $[3, 3, 2, 4]$ & $[3, 3, 2, 5]$ & $[3, 3, 2, 6]$
                   & $[4, 3, 2, 5]$ & $[4, 3, 2, 6]$ \\[2pt]
$\phi_{45}([a,b,c])$ & $-1$ & $88$ & $-3740$
                    & $95931$ & $-720940$ \\
\midrule
$[a, b_1, b_2, c]$ &  $[4, 4, 3, 5]$ & $[4, 4, 3, 6]$ & $[5, 3, 2, 6]$ &
                  $[5, 4, 3, 6]$ & $[5, 5, 4, 6]$ \\[2pt]
$\phi_{45}([a,b,c])$ & $16038$ & $681615$ & $-835953624$
                    & $62772732$ & $-47271276$ \\
\bottomrule \\
\end{tabular}
\caption{Fourier coefficients of $\Phi_{45}$, where $b = b_1 \slashdiv \sqrt{D} + b_2 (1 + \sqrt{D}) \slashdiv 2$ }
\label{tab:phi45_coefficients}
\end{table}

\subsection{Input data for the implementation}
\label{ssec:vector_valued_format}

We now explain the format of Fourier expansions of vector valued modular forms that can be passed to our implementation.  Such a Fourier expansion is represented by a dictionary~{\tt f} whose keys are tuples and whose values are dictionaries, which we call the inner dictionaries of~{\tt f}.  The keys of~{\tt f} represent vectors in~$\cO^\# \slashdiv \cO = \disc\, L$ with respect to the basis $1/\sqrt{D},\, (1 + \sqrt{D}) \slashdiv 2$ of~$\cO^\#$.  A unique representative can be obtained as follows.
\begin{lstlisting}
D = -3
prec = HermitianModularFormD2Filter_diagonal(2, D)
factory = HermitianModularFormD2Factory(prec)
factory._reduce_vector_valued_index((2,3))
\end{lstlisting}
This gives the tuple $(-1, 0)$.  That is, we have
\begin{gather*}
  2 \cdot \frac{1}{\sqrt{-3}} + 3 \cdot \frac{1 + \sqrt{-3}}{2}
=
  -1 \cdot
  \frac{1}{\sqrt{-3}}
\;\in
  \disc(L)
\text{.}
\end{gather*}
We say that a tuple is reduced if it is reproduced by {\tt \_reduce\_vector\_valued\_index}.  The keys of the input dictionary~{\tt f} must by reduced.  In the case $D = -3$ the reduced tuples are $(0,0)$, $(1,0)$, and $(-1,0)$.

The inner dictionaries of~{\tt f} have rational numbers as keys and values.  The keys correspond to the exponent of $q$ in the Fourier expansion of the corresponding component.  The values are the associated Fourier coefficients.
\begin{example}
The truncated Fourier expansion of a vector valued modular form
\begin{gather*}
  F_{0} = 1 + 2\, q + O(q^2)
\text{,}
\quad
  F_{\frac{\pm 1}{\sqrt{-3}}} = 4\, q^{\frac{1}{3}} + O(q^2)
\end{gather*}
can be represented as follows.
\begin{lstlisting}[basicstyle=\ttfamily\small]
  {  (0,0) : { 0 : 1, 1 : 2 },  (1,0) : { 1/3 : 4 },
    (-1,0) : { 1/3 : 4 } }
\end{lstlisting}
\end{example}

In some special cases vector valued modular forms of type~$\rho_L$ can be computed using Eisenstein series and theta series.  However, this approach does not always work.  In~\cite{Ra12}, a method is provided that allows us to compute vector valued modular forms for all~$L$.  The input data used in Section~\ref{ssec:using_implementation} was obtained this way.

\subsection{Using the implementation}
\label{ssec:using_implementation}

One can apply explicit computations of Fourier expansions to prove relations between various kinds of modular forms.  For example, in~\cite{GK10} the restrictions of quaternionic modular forms to the Hermitian and Siegel upper half space were investigated.  We shall establish a product expansion for a certain elliptic modular form.  The case that we consider has no particular importance, but it was chosen for reasons of simplicity and clearness.  It is, however, similar to computations that are necessary in string theory, where one wants to derive equalities between Borcherds products and Maass lifts~\cite{DG07}.

Install Sage~\cite{sage} in such a way that you can modify it.  Using git~\cite{githomepage}, clone the author's Purple Sage repository at~\cite{psagerepository}.
\begin{lstlisting}[basicstyle=\ttfamily\small]
git clone git://github.com/martinra/psage.git
cd psage
git fetch origin paper_computing_borcherds_products:borcherds
git checkout borcherds
sage -sh
./build_ext
\end{lstlisting}
Link the downloaded psage library to the site~packages folder of Sage's Python.  If, for example, both Sage and Purple~Sage are in the same folder, go there and type:
\begin{lstlisting}[basicstyle=\ttfamily\small]
PSAGE=`readlink -f psage/psage`
ln -s $PSAGE sage/local/lib/python/site-packages/
\end{lstlisting}
Start Sage, either in the terminal or as a notebook.  The code that proves the final equality~(\ref{eq:example_hermitian_equality}) can be found at~\cite{RaHomepage}.  One can either load the corresponding file by
\begin{lstlisting}[basicstyle=\ttfamily\small]
load borcherds_example.sage
\end{lstlisting}
or copy the code line by line.  We reproduce parts of this code, which illustrate how to invoke our implementation of Algorithm~\ref{alg:borcherdsproduct}.  The input data is a vector valued modular form~{\tt F}.
\begin{lstlisting}[basicstyle=\ttfamily\small]
F = load("borcherds_example_input_data.sobj")
B = 11; D = -3;
prec = HermitianModularFormD2Filter_diagonal(B, D)
phi = borcherds_product__by_logarithm(F, prec)
\end{lstlisting}
We start by loading a file that contains the Fourier expansion~$f$ of a vector valued elliptic modular form.  In the third line, we define a filter in the sense of~\cite{Ra11}.  It contains all Hermitian matrices~$[a, b, c]$ satisfying $a, c < B$.  In the last line, we invoke our implementation.  This way, we obtain a Fourier expansion~{\tt phi} of the corresponding Borcherds product.  The resulting list of Fourier coefficients for reduced indices~$[a, b, c]$ can be found at~\cite{RaHomepage}

In order to demonstrate how to make further use of the result, we restrict it to the Poincar\'e upper half plane.
\begin{lstlisting}[basicstyle=\ttfamily\small]
prec_nr = phi.parent().monoid().filter(B)
mf_expansion = dict( enumerate(2*B * [0]) )
ch = phi.parent().characters().one_element()
for (a, b1, b2, c) in prec_nr :
  mf_expansion[a + c] += phi[(ch, (a, b1, b2, c))]
mfs = ModularForms(1, F[(0,0)][0] / 2)
mfs(PowerSeriesRing(QQ, 'q')(mf_expansion).add_bigoh(B))
\end{lstlisting}
In the first line, we obtain a filter that allows us to access all indices of a truncated Fourier expansion.  Since we have set $B = 11$, this is a list of all positive definite Hermitian matrices~$[a, b, c]$ as above satisfying $a, c < 11$.  In the second to fifth line, we restrict the Borcherds product to the Poincar\'e upper half plane.  The corresponding embedding of half spaces is
\begin{gather*}
  \tau \mapsto \begin{pmatrix} \tau & 0 \\ 0 & \tau \end{pmatrix}
\text{.}
\end{gather*}
The reader is referred to~\cite{FH00} for a general treatment of modular embeddings in the context of orthogonal modular forms.  In the last line, we construct an elliptic modular form from the dictionary that we have obtained before.

We have thus found an explicit expression for
\begin{gather}
\label{eq:example_hermitian_equality}
  W_F \prod_{[a,b,c] \succ 0} \big(1 - q^{a + c} \big)^{f( b, \disc([a, b, c]) )}
=
  - \Delta(\tau)^9
\text{,}
\end{gather}
where $\Delta$ is the unique normalized cuspform of weight~$12$.  The form~$F$ is uniquely determined by the Fourier expansion
\begin{gather*}
  F_0(\tau) = q^{-2} + O(q^0)
\text{,}\quad
  F_{\frac{\pm 1}{\sqrt{-3}}}(\tau) = O(q^0)
\text{.}
\end{gather*}

The above computation is time consuming on usual computers.  The Borcherds product $\Phi_{45}$ is a less interesting but more accessible example.  For reasons of symmetry, the restriction of $\Phi_{45}$ to $\left(\begin{smallmatrix}\tau & 0 \\ 0 & \tau \end{smallmatrix}\right)$ vanishes, and this can be checked by a direct computation.  Along the way, the reader will compute the coefficients of $\Phi_{45}$ which are given in Table~\ref{tab:phi45_coefficients}.
\begin{lstlisting}[basicstyle=\ttfamily\small]
F = load("phi_45_input_data.sobj")
B = 7; D = -3
prec = HermitianModularFormD2Filter_diagonal(B, D)
phi = borcherds_product__by_logarithm(F, prec)
\end{lstlisting}
Now one can proceed as above to obtain the restriction of $\Phi_{45}$.


\bibliographystyle{amsalpha}
\bibliography{bibliography}

\end{document}